\documentclass[12pt,reqno]{article}

\usepackage[usenames]{color}
\usepackage{amssymb}
\usepackage{graphicx}
\usepackage{amscd}

\usepackage[colorlinks=true,
linkcolor=webgreen,
filecolor=webbrown,
citecolor=webgreen]{hyperref}

\definecolor{webgreen}{rgb}{0,.5,0}
\definecolor{webbrown}{rgb}{.6,0,0}

\usepackage{color}
\usepackage{fullpage}
\usepackage{float}

\usepackage[american]{babel}
\usepackage{amsmath}
\usepackage{amsthm}
\usepackage{amsfonts}
\usepackage{skak}
\usepackage{numprint}

\setlength{\textwidth}{6.5in}
\setlength{\oddsidemargin}{.1in}
\setlength{\evensidemargin}{.1in}
\setlength{\topmargin}{-.1in}
\setlength{\textheight}{8.4in}

\newcommand{\seqnum}[1]{\href{https://oeis.org/#1}{\rm \underline{#1}}}

\begin{document}

\begin{center}
  \includegraphics[width=1in]{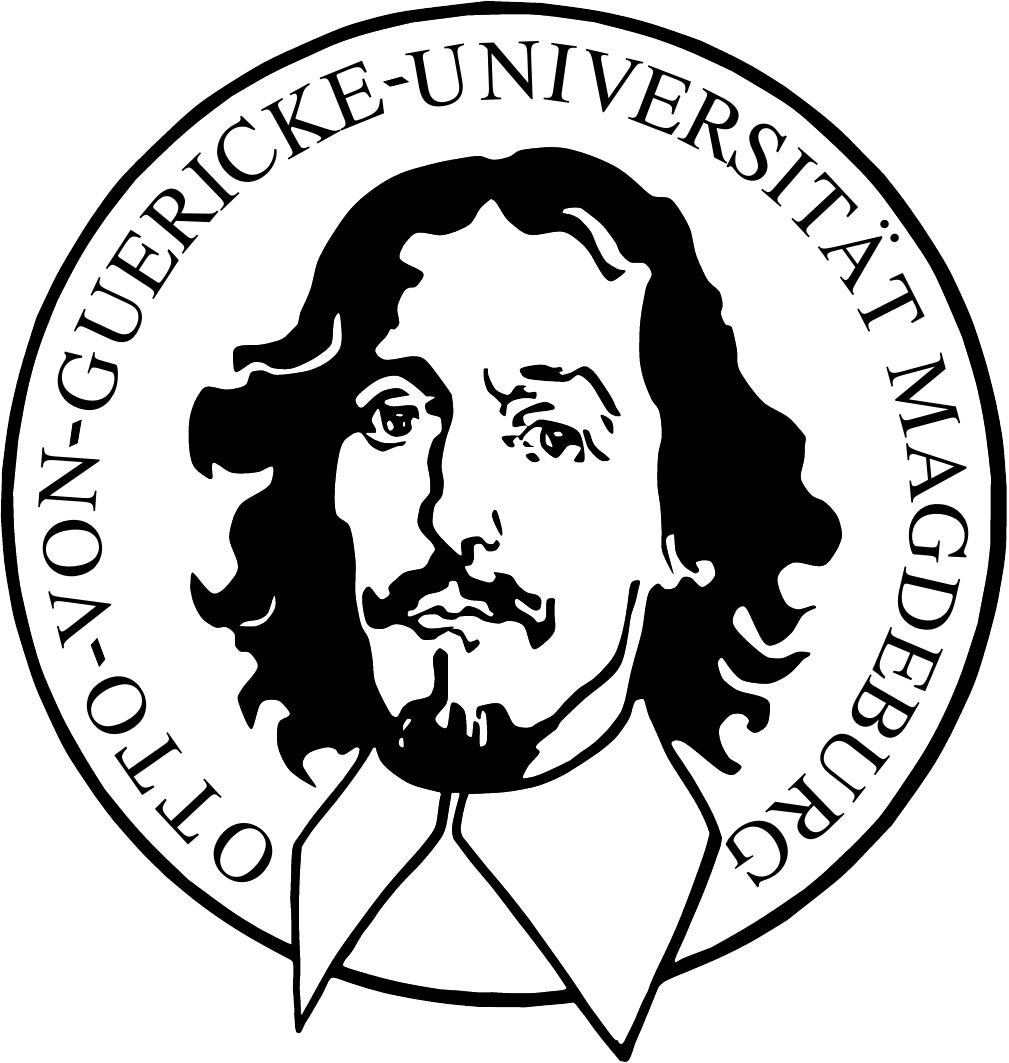} \hfill
  \includegraphics[width=1in]{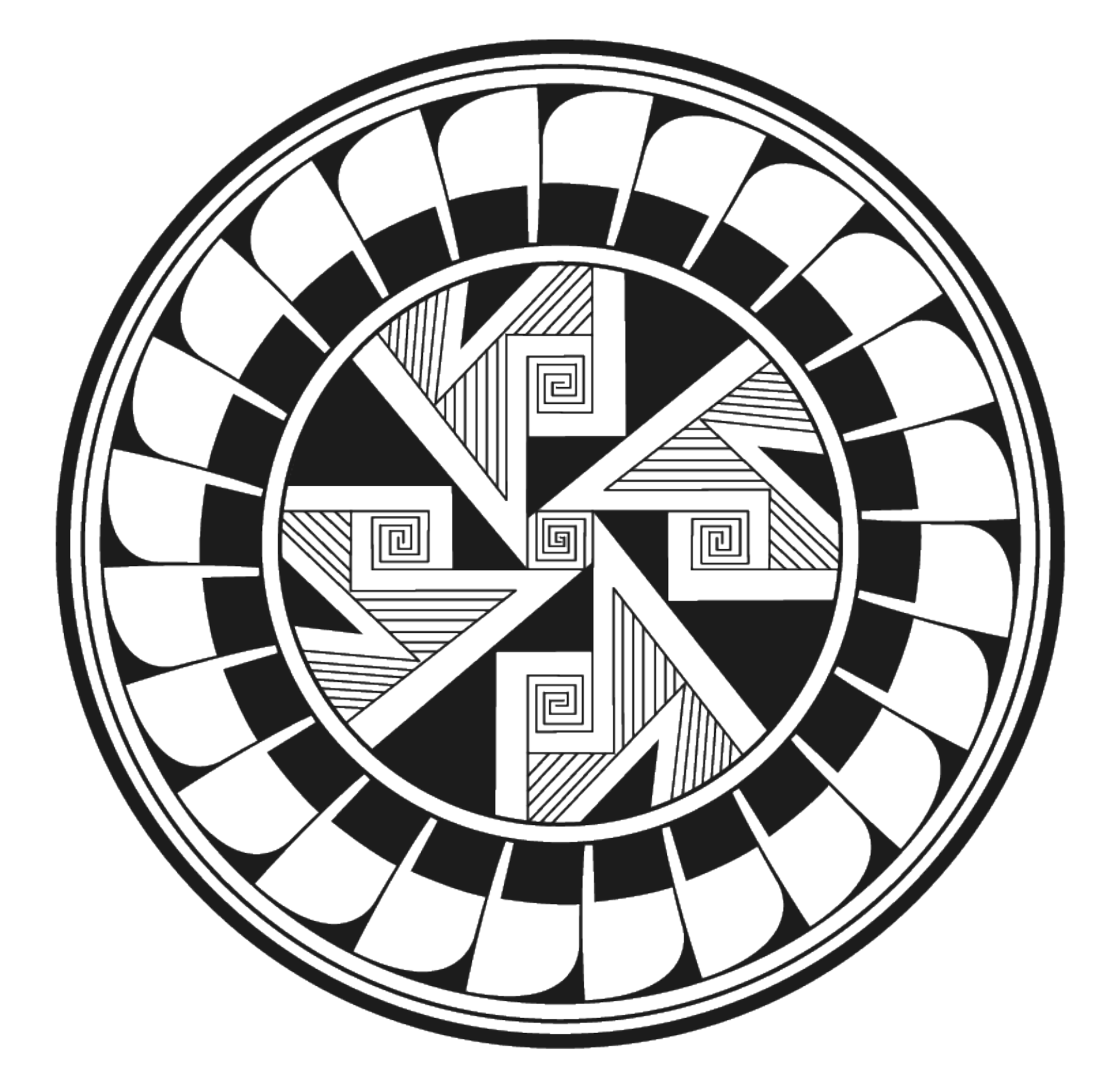}
\end{center}

\theoremstyle{plain}
\newtheorem{theorem}{Theorem}
\newtheorem{corollary}[theorem]{Corollary}
\newtheorem{lemma}[theorem]{Lemma}
\newtheorem{proposition}[theorem]{Proposition}

\theoremstyle{definition}
\newtheorem{definition}[theorem]{Definition}
\newtheorem{example}[theorem]{Example}
\newtheorem{conjecture}[theorem]{Conjecture}
\newtheorem{Observation}[theorem]{Observation}

\theoremstyle{remark}
\newtheorem{remark}[theorem]{Remark}

\begin{center}
\vskip 1cm{\LARGE\bf Domination Polynomial of the Rook Graph
}
\vskip 1cm
Stephan Mertens \\
\selectlanguage{ngerman} {Institut f\"ur Physik \\
Otto-von-Guericke Universit\"at Magdeburg \\
Postfach 4120, 39016 Magdeburg, Germany} \\
and \\
Santa Fe Institute\\
1399 Hyde Park Rd,
Santa Fe, NM 87501,
USA\\
\href{mailto:mertens@ovgu.de}{\texttt{mertens@ovgu.de}}
\end{center}

\vskip .2 in

\begin{abstract}
  A placement of chess pieces on a chessboard is called \emph{dominating} if each
free square of the chessboard is under attack by at least one
piece. In this contribution we compute the number of dominating
arrangements of $k$ rooks on an $n\times m$ chessboard. To this end we derive an
expression for the corresponding generating function, the
domination polynomial of the $n\times m$ rook graph.
\end{abstract}

\section{Introduction}

A placement of chess pieces on a chessboard is called
\emph{dominating} if each free square of the chessboard is under
attack by at least one piece. Chess domination problems have been
studied at least since 1862, when Jaenisch \cite{jaenisch:62} posed the
problem to find the minimum number of queens needed to dominate the
$8\times 8$ board. This number is known as the domination number
$\gamma_{\symqueen}$. The minimum number of knights needed to dominate
the $8\times 8$ board is called $\gamma_{\symknight}$. We can easily show
that $\gamma_{\symqueen} \leq 5$ and that $\gamma_{\symknight} \leq 12$
by presenting dominating placements of 5 queens and 12 knights
(Fig.~\ref{fig:8x8-examples}). Proving that both inequalities are
actually equations is more challenging \cite{watkins:12}.

\begin{figure}
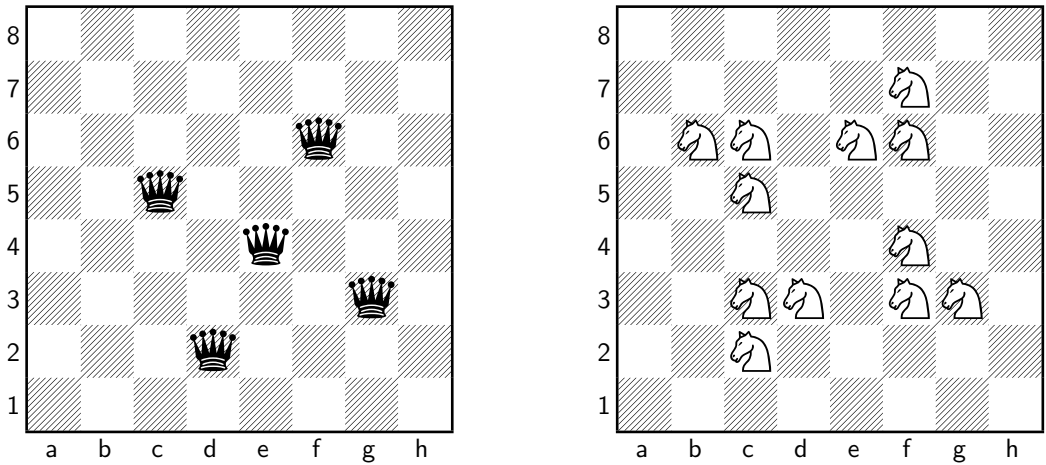

\begin{tabular}{p{0.45\textwidth}p{0.45\textwidth}}  
  \begin{center}
    \fenboard{8/8/5q2/2q5/4q3/6q1/3q4/8 - - - 0 0}
    $$\showboard$$
  \end{center} &
    \begin{center}
    \fenboard{8/5N2/1NN1NN2/2N5/5N2/2NN1NN1/2N5/8 - - - 0 0}
    $$\showboard$$ 
  \end{center}\\[-6ex]
\end{tabular}
\caption{5 queens or 12 knights can dominate the $8\times 8$ board.}
\label{fig:8x8-examples}
\end{figure}

The domination number for the $n\times n$ chessboard defines the
domination sequence. For queens, this sequence is
$\gamma_{\symqueen}(n) = 1, 1, 1, 2, 3, 3, 4, 5, 5, \ldots\,,$ which
is sequence \seqnum{A075458} in the On-Line Encyclopedia of Integer
Sequences (OEIS) \cite{oeis}.  For knights, the sequence reads
$\gamma_{\symknight}(n) = 1, 4, 4, 4, 5, 8, 10, 12, 14, \ldots$, which
is sequence \seqnum{A006075} in the OEIS. Both sequences are hard to
compute even for moderate values of $n$
\cite{fischer:03,kearse:gibbons:01,oestergard:weakley:01,rubin:03}.
The sequence $\gamma_{\symqueen}(n)$ is only known for $n \leq 25$,
and $\gamma_{\symknight}(n)$ for $n \leq 21$.

Domination on chessboards is a rich and active topic of research
\cite{cockayne:90,hedetniemi:hedetniemi:21}. The field got an additional boost
when it was extended to domination in graphs \cite{haynes:hedetniemi:slater:98}.
A subset $S\subseteq V$ of vertices in a graph $G=(V,E)$ is called a
dominating set if every vertex $v\in V$ is either an element of $S$ or
is adjacent to an element of $S$.

Chess domination problems can be recast in graph theory terms by
defining an appropriate graph. Take, as an example, queens on an
$n\times m$ board. In the queen graph $Q_{n,m}$ each vertex represents
a square on the chessboard. Two vertices $v$ and $u$ share an edge if
and only if a queen can legally move from $v$ to $u$.  The graphs
$N_{n,m}$ for knights and $R_{n,m}$ for rooks are defined
correspondingly.

A dominating placement of $k$ queens corresponds to dominating set of
cardinality $k$ in the graph $G=Q_{n,m}$. The problem of computing
$\gamma_{\symqueen}(n)$ corresponds to finding the minimum cardinality
of dominating sets in $Q_{n,n}$.

Let $d_G(k)$ denote the number of dominating sets in $G$ of
cardinality $k$. The domination polynomial $D_G(x)$ is defined as the
generating function of $d_G(k)$,
\begin{equation}
  \label{eq:def-D_G}
  D_G(x) = \sum_{k=\gamma_G}^{|V|}  d_G(k) x^k\,. 
\end{equation}
Like other graph polynomials, the domination polynomial encodes
many interesting properties of a graph \cite{akbari:alikhani:peng:10,alikhani:peng:14}. 

In this contribution we will compute the domination polynomial of the
rook graph $R_{n,m}$, which is the cartesian product of the complete
graphs $K_n$ and $K_m$.  Rook domination is considerably easier to
analyze than the domination of queens and knights. For example, the
domination sequence is given by the simple formula
\begin{equation}
  \label{eq:gamma-rook}
  \gamma_{R_{n,m}} = \min(n, m)\,.
\end{equation}
This follows from the fact that for domination, each row \emph{or}
each column must contain a rook.

Despite the simplicity of rook domination, very little is known about
the domination polynomial. Notable exceptions are its unimodality 
\cite{burcroff:obrien:23} and its lowest degree coefficient
\cite[problem 34b]{yaglom:yaglom:64}:
\begin{equation}
  \label{eq:R-gamma}
  d_{R_{n,m}}(\gamma_{R_{n,m}}) = \begin{cases}
    \max(n,m)^{\min(n,m)}, & \text{if $n \neq m$;}\\
    2n^n-n!, & \text{if $n = m$.}
    \end{cases}
  \end{equation}
  \begin{proof}
The case $n \neq m$ is obvious. For the square case we can place the
$n$ rooks to cover every row  ($n^n$ possibilities) or every column
(another $n^n$ possibilities). Adding these two numbers double counts
the configurations that cover both all columns and all rows. Hence we
need to subtract the number of those configurations, which is $n!$.
\end{proof}
To compute all the other coefficients, we will deploy the machinery of
generating functions. But before doing this, we will derive
a recursive equation that links $d_{R_{n,m}}(k)$ to enumerations in smaller
boards. 

\section{Recursion}

A dominating arrangement of rooks does not necessarily have a rook in
every column \emph{and} every row of the board. Think of $n$ rooks in the first
row, leaving all other $n-1$ rows empty. Arrangements of $k$ rooks that contain at
least one rook in every column \emph{and} every row are a subset of all
dominating configurations, and their number $E_{n,m}(k)$ is less than $d_{R_{n,m}}(k)$. We need
$E_{n,m}(k)$ to compute $d_{R_{n,m}}(k)$:
\begin{theorem} \label{thm:R-E}
  Let $E_{n,m}(k)$ denote the number of placements of $k$
  indistinguishable rooks on an $n\times m$ chessboard such that each
  row and each column contain at least one rook. Then
  \begin{equation}
    \label{eq:R-E}
    d_{R_{n,m}}(k) = {nm \choose k} - \sum_{r=1}^n \sum_{c=1}^m {n \choose
      r} {m \choose c} E_{n-r,m-c}(k)\,.
  \end{equation}
\end{theorem}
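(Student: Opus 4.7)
The plan is to prove Theorem~\ref{thm:R-E} by directly counting the \emph{non}-dominating placements of $k$ rooks on the $n\times m$ board and subtracting from the total $\binom{nm}{k}$. Since the total number of placements of $k$ indistinguishable rooks is $\binom{nm}{k}$, it suffices to show that the number of non-dominating placements equals the double sum on the right-hand side.

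The conceptual core is a classification of non-dominating placements by the set of rook-free rows and columns. Given a placement, let $R\subseteq[n]$ be the set of rows containing no rook and $C\subseteq[m]$ the set of columns containing no rook. I would first observe that a placement is non-dominating if and only if both $R$ and $C$ are nonempty: if either is empty, every square lies in a row or column occupied by some rook and is therefore either dominated or itself a rook; conversely, if $i\in R$ and $j\in C$, the square $(i,j)$ is empty and not attacked.

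Next, I would fix $r\geq 1$ and $c\geq 1$ and count non-dominating placements with $|R|=r$ and $|C|=c$. The sets $R$ and $C$ can be chosen in $\binom{n}{r}\binom{m}{c}$ ways. Once they are fixed, the $k$ rooks must all lie in the $(n-r)\times(m-c)$ subboard formed by the remaining rows and columns. Crucially, by the defining property that $R$ and $C$ contain \emph{all} empty rows and columns, every row and every column of this subboard must contain at least one rook. The number of such placements is by definition $E_{n-r,m-c}(k)$. Since the pair $(R,C)$ is uniquely determined by the placement, there is no overcounting, and summing over $r$ and $c$ yields the desired total.

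The only subtlety, and thus the main thing to check, is the behavior at the boundary of the index range, namely $r=n$ or $c=m$. In those degenerate cases the subboard is empty and the convention $E_{0,0}(0)=1$ together with $E_{0,j}(k)=E_{i,0}(k)=0$ whenever $i+j+k>0$ makes the formula consistent: the only contribution comes from $(r,c,k)=(n,m,0)$, corresponding to the unique empty placement, which is indeed non-dominating whenever $n,m\geq 1$. Beyond this bookkeeping, the argument is a clean bijective count with no further obstacles.
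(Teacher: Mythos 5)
Your proposal is correct and follows essentially the same argument as the paper: subtract the non-dominating placements, classified uniquely by their (nonempty) sets of rook-free rows and columns, with the requirement that all remaining rows and columns be occupied ensuring no overcounting. Your extra remark on the boundary convention at $r=n$, $c=m$ is harmless bookkeeping that only matters for $k=0$, which lies below $\gamma_{R_{n,m}}$ anyway.
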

\begin{proof}
  The first term in \eqref{eq:R-E} is the number of all possible rook
  arrangements. Hence, we need to prove that the second term equals
  the number of non-dominating arrangements.

  In a non-dominating placement, at least one square is not attacked
  by any rook. This means, that the row and column of that square is
  void of rooks. So we need to have one or more empty rows and one
  or more empty columns. The second term in \eqref{eq:R-E} sums over all
  combinations of $r=1,\ldots,n$ empty rows and $c=1,\ldots,m$ empty
  columns. In order to avoid overcounting, each of the remaining $n-r$
  rows and $m-c$ columns must contain at least one rook. The number of
  the corresponding arrangements is given by $E_{n-r,m-c}(k)$.
\end{proof}
Theorem \ref{thm:R-E} allows us to compute $d_{R_{n,m}}(k)$ only if we know
how to compute $E_{n,m}(k)$, which seems to be as difficult as the
original task. The square case $E_{n,n}(k)$ can be found in the OEIS
as \seqnum{A055599}, but we need $E_{n,m}(k)$ for general $n$ and $m$.
Luckily, we can compute $E_{n,m}(k)$ by recursion:
\begin{theorem} \label{thm:E-recursion}
  With base case $E_{0,m}(k) = E_{n,0}(k) = 0$, the numbers
  $E_{n,m}(k)$ can be computed by recursion over $n$ and $m$:
  \begin{equation}
    \label{eq:E-recursion}
    E_{n,m}(k) = {nm \choose k} - \sum_{r=0}^n \sum_{c=0}^m {n \choose
      r} {m \choose c} E_{n-r,m-c}(k) (1-\delta_{0,r}\delta_{0,c})\,,
  \end{equation}
  where $\delta_{i,,j}$ is the Kronecker delta.
\end{theorem}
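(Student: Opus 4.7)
The plan is to mimic the inclusion-style counting used for Theorem \ref{thm:R-E}, but now to partition the set of \emph{all} $\binom{nm}{k}$ placements of $k$ rooks according to the exact number of empty rows and empty columns. Each placement has a well-defined pair $(r,c)$, where $r$ counts its empty rows and $c$ its empty columns. First I would choose $r$ and $c$, then select which $r$ of the $n$ rows and which $c$ of the $m$ columns are empty (a factor $\binom{n}{r}\binom{m}{c}$), and finally count placements on the remaining $(n-r)\times(m-c)$ subboard in which none of the surviving rows or columns is empty. By the definition of $E$, this last count is exactly $E_{n-r,m-c}(k)$.

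Because every placement belongs to exactly one class --- the class indexed by its true pair $(r,c)$ of empty lines --- summing these contributions produces the identity
\begin{equation*}
\binom{nm}{k} \;=\; \sum_{r=0}^{n}\sum_{c=0}^{m} \binom{n}{r}\binom{m}{c}\, E_{n-r,m-c}(k).
\end{equation*}
The unique term with $(r,c)=(0,0)$ is $E_{n,m}(k)$ itself, so moving it to the left-hand side yields \eqref{eq:E-recursion}; the factor $1-\delta_{0,r}\delta_{0,c}$ simply excises that one term from the sum on the right. The base cases $E_{0,m}(k)=E_{n,0}(k)=0$ are immediate, since a board with no rows or no columns has no room for any placement that has rooks in every row \emph{and} every column, and they make the recursion terminate on strictly smaller parameters.

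The conceptual hurdle, such as it is, lies in noticing that one should \emph{not} restrict the double sum to $r\geq 1$ and $c\geq 1$ as in Theorem \ref{thm:R-E}. There the target was non-dominating placements, which require at least one empty row \emph{and} at least one empty column simultaneously; here the target is placements with \emph{no} empty row and \emph{no} empty column, so the natural complement is placements with at least one empty row \emph{or} at least one empty column. Extending the sum over all $r,c\geq 0$ and isolating the $(0,0)$ contribution is the key move; verifying that each placement is counted exactly once --- which hinges on the fact that the subboard enumeration $E_{n-r,m-c}(k)$ forbids any \emph{further} empty rows or columns --- is the only point in the argument that requires genuine attention.
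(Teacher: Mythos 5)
Your proof is correct and is essentially the paper's argument: both classify placements by their exact set of empty rows and columns, with $\binom{n}{r}\binom{m}{c}E_{n-r,m-c}(k)$ counting each class. Writing the full partition identity $\binom{nm}{k}=\sum_{r,c}\binom{n}{r}\binom{m}{c}E_{n-r,m-c}(k)$ and isolating the $(0,0)$ term is just a rearrangement of the paper's complementary count of placements with at least one empty row \emph{or} column.
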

\begin{proof}
  The proof is almost identical to the proof of Theorem~\ref{thm:R-E},
  except that here the sums over $r$ and $c$ start at $0$. This is
  because even with all rows being covered ($r=0$), a configuration
  does not count if a single column is not covered ($c > 0$). And vice
  versa. The only case that needs to be excluded is $c=r=0$. This
  is the reason for the factor $(1-\delta_{0,r}\delta_{0,c})$.
\end{proof}

Theorems~\ref{thm:R-E} and \ref{thm:E-recursion} are
sufficient to compute $d_{R_{n,m}}(k)$ numerically. A literal
implementation of \eqref{eq:R-E} and \eqref{eq:E-recursion} in a
simple Python script computes $d_{R_{10,10}}(k)$ in a few seconds.
As a sanity check for an implementation one can compare
the numerical results to \eqref{eq:R-gamma} and to
the following ``high density'' formula:
\begin{corollary} \label{cor:high-density}
  For $k > nm-n-m-\min(n,m)+2$,
\begin{equation}
  \label{eq:R-high-density}
  d_{R_{n,m}}(k) = {nm \choose k} - nm {(n-1)(m-1) \choose k}\,. 
\end{equation}
\end{corollary}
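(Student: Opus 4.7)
The plan is a two-step collapse built on Theorems~\ref{thm:R-E} and~\ref{thm:E-recursion}, using only the trivial observation that $E_{N,M}(k) = 0$ whenever $k > NM$ (one cannot fit $k$ rooks on an $N \times M$ board).

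Step one: in the double sum of Theorem~\ref{thm:R-E}, I would argue that almost every term vanishes. Assume without loss of generality $m \leq n$, so the threshold reads $k > nm - n - 2m + 2$. Among pairs $(r,c)$ with $r,c \geq 1$ and $(r,c) \neq (1,1)$, the quantity $(n-r)(m-c)$ is maximized at $(r,c) = (2,1)$, where it equals $nm - n - 2m + 2$. The hypothesis therefore forces $E_{n-r,m-c}(k) = 0$ for every such pair, and only the single term $\binom{n}{1}\binom{m}{1}\,E_{n-1,m-1}(k) = nm \cdot E_{n-1,m-1}(k)$ survives.

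Step two: I would show that in this regime $E_{n-1,m-1}(k) = \binom{(n-1)(m-1)}{k}$, meaning every placement of $k$ rooks on the $(n-1)\times(m-1)$ sub-board already meets every row and column. Via Theorem~\ref{thm:E-recursion} applied with $N = n-1$, $M = m-1$, this reduces to verifying $E_{N-r,M-c}(k) = 0$ for all $(r,c) \neq (0,0)$. The tightest such constraint, coming from $(r,c) = (1,0)$, is $k > (n-2)(m-1) = nm - n - 2m + 2$, which is exactly our hypothesis; every other pair yields a strictly weaker requirement. Substituting $E_{n-1,m-1}(k) = \binom{(n-1)(m-1)}{k}$ into the collapsed expression from step one produces \eqref{eq:R-high-density}.

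The only delicate point is identifying the extremal pair $(r,c)$ correctly. The threshold is asymmetric in $n$ and $m$ because, assuming $m \leq n$, one has $(n-2)(m-1) \geq (n-1)(m-2)$, and this is what makes $\min(n,m)$ rather than $\max(n,m)$ appear in the bound. Once that comparison is in hand, the rest is routine substitution.
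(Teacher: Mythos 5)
Your proof is correct, but it takes a genuinely different route from the paper. The paper argues directly and combinatorially: under the stated bound on $k$ there are too few free squares to clear the $n+m+\min(n,m)-2$ squares needed for two unattacked squares, so a non-dominating placement has exactly one unattacked square $(x,y)$; choosing it ($nm$ ways) and then placing the $k$ rooks on the $(n-1)(m-1)$ squares off row $x$ and column $y$ gives the correction term. You instead collapse the recursions of Theorems~\ref{thm:R-E} and~\ref{thm:E-recursion} using only the trivial vanishing $E_{N,M}(k)=0$ for $k>NM$: in Theorem~\ref{thm:R-E} every term except $(r,c)=(1,1)$ dies because the largest product $(n-r)(m-c)$ over the remaining pairs is $(n-2)(m-1)=nm-n-2m+2$ (for $m\le n$), and the same extremal computation applied to Theorem~\ref{thm:E-recursion} shows $E_{n-1,m-1}(k)=\binom{(n-1)(m-1)}{k}$ in this regime. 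Both arguments turn on the identical comparison $(n-2)(m-1)\ge(n-1)(m-2)$, which is where $\min(n,m)$ enters. Your version has the merit of being a purely formal consequence of the two theorems already proved --- in particular it needs no separate check that configurations with a single unattacked square are counted without repetition --- while the paper's version supplies a direct combinatorial reading of each term. The only points worth making explicit in a polished write-up are the justification that $(n-r)(m-c)$ is nonincreasing in $r$ and $c$ (so the two ``adjacent'' pairs really are extremal) and the degenerate cases $n=1$ or $m=1$, where $\binom{0}{k}=0$ for the forced $k\ge 1$ keeps everything consistent.
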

\begin{proof}
  An unattacked square implies that its row and its column are void of
  rooks. One empty row and one empty column contain $m+n-1$ squares.
  If $k$ is larger than $nm-(n+m-1)=(n-1)(m-1)$, we have too many rooks on the
  board to clear a column and a row and all ${nm \choose k}$
  placements are dominating. The second binomial in
  \eqref{eq:R-high-density} is zero in this case, as it should be.

  If we want two unattacked squares we need to clear one row, one
  column and another row or column (whichever is shorter). This means
  $n+m+\min(n,m)-2$ empty squares.  
  For $k > nm-(n+m+\min(n,m)-2) $ we have again too many rooks on the
  board to achieve this. Hence we are left with a single unattacked square
  $(x,y)$, which can be anywhere on the board (factor $nm$). The $k$ rooks
  can be placed arbitrarily on the $nm-n-m+1$ squares
  other than row $x$ and column $y$, which explains the second
  binomial in \eqref{eq:R-high-density}.
\end{proof}

\section{The domination polynomial}
\label{sec:domination_polynomial}

Theorem~\ref{thm:R-E} tells us that we can compute the generating
function for $d_{R_{n,m}}(k)$ once we know the generating function for
$E_{n,m}(k)$. So let us have a closer look on the latter.

In the rook graph $R_{n,m}$, the vertices represent the squares on the
board.  There is another graph $K_{n,m}$, in which the \emph{edges}
represent the squares. Think of square $(x,y)$ as connecting row $x$
with column $y$. Hence, the vertices in $K_{n,m}$ are the rows and the
columns, and because each row is connected to each column by the square
in their intersection, $K_{n,m}$ is the complete bipartite graph.

The set of squares with rooks correspond to a subset of edges of
$K_{n,m}$, and each row and each column contain a rook if and only if
the corresponding edges are an \emph{edge cover}, i.e. a set $F$ of edges
such that each vertex of $K_{n,m}$ is adjacent to at least one $f\in
F$. Hence, $E_{n,m}(k)$ denotes the number edge coverings of
cardinality $k$ of the complete bipartite graph $K_{n,m}$. 
The corresponding generating function, the  \emph{edge cover polynomial} 
of $K_{n,m}$, is given by \cite[Corollary 5]{akbari:oboudi:13}
\begin{equation}
  \label{eq:bipartite-edge-coverings}
  \sum_{k=0}^{nm} E_{n,m}(k) x^k = \sum_{k=0}^m(-1)^{m-k}{m \choose k}((1+x)^k-1)^n\,.
\end{equation}

\begin{theorem} \label{thm:domination-polynomial}
  The domination polynomial of the $n\times m$ rook graph can be
  written as
  \begin{equation}
    \label{eq:domination-polynomial}
   D_{R_{n,m}}(x) = \big((1+x)^n-1\big)^m  - (-1)^m \sum_{k=0}^{m-1}
     (-1)^{k}{m \choose k}\big((1+x)^k-1\big)^{n}\,.
  \end{equation}
\end{theorem}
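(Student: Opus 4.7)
The plan is to turn the recursion of Theorem \ref{thm:R-E} into a generating function identity and substitute the closed form \eqref{eq:bipartite-edge-coverings} for the edge cover polynomial $\mathcal{E}_{n,m}(x) := \sum_k E_{n,m}(k)x^k$. Multiplying \eqref{eq:R-E} by $x^k$, summing over $k$, and changing variables via $u=n-r$, $v=m-c$ produces
$$D_{R_{n,m}}(x) = (1+x)^{nm} - \sum_{u=0}^{n-1}\sum_{v=0}^{m-1}\binom{n}{u}\binom{m}{v}\mathcal{E}_{u,v}(x).$$

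The core of the proof is to evaluate this double sum in closed form. I would first substitute $\mathcal{E}_{u,v}(x)=\sum_{j=0}^{v}(-1)^{v-j}\binom{v}{j}((1+x)^j-1)^u$ into the inner sum over $v$, exchange the order of summation, and apply $\binom{m}{v}\binom{v}{j}=\binom{m}{j}\binom{m-j}{v-j}$. The resulting inner alternating binomial sum collapses via $\sum_{w=0}^{N-1}(-1)^w\binom{N}{w}=-(-1)^N$ (valid for $N\geq 1$), yielding
$$\sum_{v=0}^{m-1}\binom{m}{v}\mathcal{E}_{u,v}(x) = ((1+x)^m-1)^u - \mathcal{E}_{u,m}(x).$$

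Repeating the same manipulation on the outer $u$-sum uses the symmetric representation $\mathcal{E}_{u,m}(x)=\sum_{j=0}^{u}(-1)^{u-j}\binom{u}{j}((1+x)^j-1)^m$, obtained by applying \eqref{eq:bipartite-edge-coverings} with $n$ and $m$ interchanged (since $K_{n,m}\cong K_{m,n}$). The first piece reduces by the binomial theorem to $(1+x)^{nm}-((1+x)^m-1)^n$, while the second reduces to $((1+x)^n-1)^m - \mathcal{E}_{n,m}(x)$. After the $(1+x)^{nm}$ terms cancel, this leaves the symmetric closed form
$$D_{R_{n,m}}(x) = ((1+x)^n-1)^m + ((1+x)^m-1)^n - \mathcal{E}_{n,m}(x).$$

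To match \eqref{eq:domination-polynomial}, I note that $((1+x)^m-1)^n$ is precisely the $k=m$ summand in $\mathcal{E}_{n,m}(x)$; subtracting the remaining $k=0,\ldots,m-1$ terms and factoring $(-1)^{m-k}=(-1)^m(-1)^k$ reproduces the stated expression. The argument is conceptually straightforward; the main obstacle is the bookkeeping in two nested applications of the partial alternating-sum identity, where careful sign tracking is required to extract the boundary ($j=m$ and $j=n$) terms that form the non-vanishing part of the result.
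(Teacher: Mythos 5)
Your proof is correct, and it follows the same overall strategy as the paper --- pass Theorem~\ref{thm:R-E} through generating functions, insert the Akbari--Oboudi formula \eqref{eq:bipartite-edge-coverings}, and collapse alternating binomial sums --- but you organize the computation differently. The paper first evaluates the sum over rows, where $\big((1+x)^k-1\big)^r$ makes $\sum_{r=0}^{n-1}\binom{n}{r}A^r=(1+A)^n-A^n$ directly applicable, and only afterwards swaps the $c$ and $k$ sums and invokes $\sum_{c=k}^{m-1}(-1)^{c-k}\binom{m}{c}\binom{c}{k}=-(-1)^{m-k}\binom{m}{k}$. You instead collapse the column sum first, establishing the clean identity $\sum_{v=0}^{m-1}\binom{m}{v}\mathcal{E}_{u,v}(x)=\big((1+x)^m-1\big)^u-\mathcal{E}_{u,m}(x)$ via trinomial revision and the partial alternating sum (which is the same combinatorial identity the paper uses, in disguise), and then repeat the collapse on the outer sum using the $n\leftrightarrow m$ symmetry of the edge cover polynomial of $K_{n,m}$ --- an extra but trivially justified ingredient the paper's proof of the theorem does not need. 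The payoff of your ordering is that you arrive first at the manifestly symmetric form $D_{R_{n,m}}(x)=\big((1+x)^n-1\big)^m+\big((1+x)^m-1\big)^n-\mathcal{E}_{n,m}(x)$, which is exactly the paper's \eqref{eq:proof-7}; so you effectively prove Corollary~\ref{cor:domination-polynomial} en route and peel off the $k=m$ term of $\mathcal{E}_{n,m}(x)$ to recover \eqref{eq:domination-polynomial}, reversing the paper's order of deduction. All the individual steps check out, including the sign bookkeeping $\sum_{w=0}^{N-1}(-1)^w\binom{N}{w}=-(-1)^N$ for $N\ge 1$ and the cancellation of the $(1+x)^{nm}$ terms.
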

\begin{proof}
 Multiplication of \eqref{eq:R-E} by $x^k$ and summation over
 $k=0,\ldots,nm$ yields
 \begin{equation}
   \label{eq:proof-1}
   D_{R_{n,m}}(x) = \sum_{k=0}^{nm}{nm \choose k} x^k - \sum_{r=1}^n \sum_{c=1}^m {n \choose
      r} {m \choose c} \sum_{k=0}^{nm} E_{n-r,m-c}(k) x^k\,.
  \end{equation}
  The first term is $(1+x)^{nm}$. In the second term, the sum over $k$
  is the edge covering polynomial
  \eqref{eq:bipartite-edge-coverings}. Inserting these terms and
  changing the summation indices $r\mapsto n-r$ and $c\mapsto m-c$
  provide us with
  \begin{equation}
     \label{eq:proof-2}
     D_{R_{n,m}}(x) = (1+x)^{nm} - \sum_{r=0}^{n-1} \sum_{c=0}^{m-1} {n \choose
        r} {m \choose c}  \sum_{k=0}^{c}(-1)^{c-k}{c \choose k}\big((1+x)^k-1\big)^{r}\,.
   \end{equation}
   Using the identity
   \begin{equation}
     \label{eq:proof-aux}
     \sum_{r=0}^{n-1} {n \choose r} A^r = (1+A)^n - A^n
   \end{equation}
   with $A=(1+x)^k-1$, we can compute the sum over $r$ to obtain
   \begin{equation}
     \label{eq:proof-3}
     D_{R_{n,m}}(x) = \big((1+x)^n-1\big)^m  + \sum_{c=0}^{m-1}  {m
       \choose c}\sum_{k=0}^{c}(-1)^{c-k}{c \choose k}\big((1+x)^k-1\big)^{n}\,.
   \end{equation}
   In order to compute the sum over $c$, we change the order of
   summation,
   \begin{equation}
     \label{eq:marcs-trick}
     \sum_{c=0}^{m-1} \sum_{k=0}^c \cdots = \sum_{k=0}^{m-1} \sum_{c=k}^{m-1}\cdots\,,
   \end{equation}
   to get
    \begin{equation}
     \label{eq:proof-4}
     D_{R_{n,m}}(x) = \big((1+x)^n-1\big)^m  + \sum_{k=0}^{m-1}
     \big((1+x)^k-1\big)^{n} \sum_{c=k}^{m-1}(-1)^{c-k}{m
       \choose c}{c \choose k}\,.
   \end{equation}
   If the sum over $c$ would run from $k$ to $m$, it would evaluate to
   $0$, see \cite[Eq.~(5.24)]{graham:knuth:patashnik:94}. Hence
   \begin{equation}
     \label{eq:proof-aux-2}
     \sum_{c=k}^{m-1}(-1)^{c-k}{m
       \choose c}{c \choose k} = -(-1)^{m-k}{m \choose k}\,,
   \end{equation}
   which yields \eqref{eq:domination-polynomial}.
\end{proof}
Of course \eqref{eq:proof-2}, \eqref{eq:proof-3} and \eqref{eq:proof-4}
are also valid representations of the domination polynomial. It is a matter
of taste to choose \eqref{eq:domination-polynomial} as ``the''
  domination polynomial. Our choice was guided by the observation
  that the ``single sum'' form of \eqref{eq:domination-polynomial} is
  the most efficient for computations with Mathematica. With
  \eqref{eq:domination-polynomial}, the computation of
  $D_{R_{50,50}}(x)$ took about 2 minutes on a laptop. 

  A blemish of \eqref{eq:domination-polynomial} is that it does not
  display the symmetry $D_{R_{n,m}}(x) = D_{R_{m,n}}(x)$. But of
  course there is a variant that does:
  \begin{corollary} \label{cor:domination-polynomial}
  The domination polynomial of the $n\times m$ rook graph can also be
  written as
  \begin{equation}
    \label{eq:domination-polynomial-symmetric}
    D_{R_{n,m}}(x) = \big((1+x)^n-1\big)^m+\big((1+x)^m-1\big)^n- (-1)^{n+m}
    \sum_{\ell=0}^n \sum_{k=0}^m {n \choose \ell} {m \choose k} (-1)^{k+\ell}(1+x)^{k\ell}\!.
  \end{equation}
\end{corollary}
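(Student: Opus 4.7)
The plan is to start from the ``single sum'' formula \eqref{eq:domination-polynomial} proved in Theorem \ref{thm:domination-polynomial} and transform it by two purely algebraic manipulations: extending the range of summation, and then expanding an inner binomial power to manufacture the symmetric double sum.

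First I would extend the sum over $k$ from $\{0,\dots,m-1\}$ to $\{0,\dots,m\}$. The missing term corresponds to $k=m$ and equals
\[
  -(-1)^m\cdot(-1)^m\binom{m}{m}\big((1+x)^m-1\big)^n = -\big((1+x)^m-1\big)^n,
\]
so compensating for it turns \eqref{eq:domination-polynomial} into
\[
  D_{R_{n,m}}(x) = \big((1+x)^n-1\big)^m + \big((1+x)^m-1\big)^n - (-1)^m\sum_{k=0}^{m}(-1)^k\binom{m}{k}\big((1+x)^k-1\big)^n.
\]
At this stage the first two terms already exhibit the desired $n\leftrightarrow m$ symmetric pair, so what remains is to rewrite the last sum in the symmetric double-sum shape.

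Next I would apply the binomial theorem to the inner factor,
\[
  \big((1+x)^k-1\big)^n = \sum_{\ell=0}^{n}\binom{n}{\ell}(-1)^{n-\ell}(1+x)^{k\ell},
\]
substitute this into the previous expression, and collect the signs. Using $(-1)^m(-1)^{n-\ell}=(-1)^{n+m}(-1)^\ell$, the single sum becomes the double sum
\[
  (-1)^{n+m}\sum_{k=0}^{m}\sum_{\ell=0}^{n}(-1)^{k+\ell}\binom{m}{k}\binom{n}{\ell}(1+x)^{k\ell},
\]
which is exactly the third term of \eqref{eq:domination-polynomial-symmetric}.

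Neither step is deep; the only thing to watch is the sign accounting. The one ``obstacle'' worth flagging is making sure that the boundary adjustment in step one produces precisely $((1+x)^m-1)^n$ with the correct sign, so that after the expansion of step two the final sign in front of the double sum comes out as $-(-1)^{n+m}$ rather than $+(-1)^{n+m}$. Once that is verified, the identity \eqref{eq:domination-polynomial-symmetric} follows immediately, and the manifest symmetry $n\leftrightarrow m$ in the resulting expression provides a nice consistency check.
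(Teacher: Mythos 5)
Your proof is correct and reaches \eqref{eq:domination-polynomial-symmetric} by essentially the same means as the paper: purely algebraic manipulation of \eqref{eq:domination-polynomial} via the binomial theorem. The one difference is the order of operations in producing the term $\big((1+x)^m-1\big)^n$: you extract it up front by completing the $k$-sum to $k=m$ and then expand $\big((1+x)^k-1\big)^n$ once, whereas the paper first expands in $\ell$, then evaluates the truncated $k$-sum in closed form using \eqref{eq:proof-aux} (which is where $\big((1+x)^m-1\big)^n$ emerges), and finally expands $\big((1+x)^\ell-1\big)^m$ a second time. Your sequencing is slightly more economical, and the sign bookkeeping you flag --- the $k=m$ term contributing $-(-1)^m\cdot(-1)^m\big((1+x)^m-1\big)^n=-\big((1+x)^m-1\big)^n$, and $(-1)^m(-1)^{n-\ell}=(-1)^{n+m}(-1)^{\ell}$ --- checks out.
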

\begin{proof}
  Binomial expansion of $\big((1+x)^k-1\big)^{n}$ in
   \eqref{eq:domination-polynomial} provides us with
   \begin{equation}
     \label{eq:proof-6}
     D_{R_{n,m}}(x) = \big((1+x)^n-1\big)^m  - (-1)^{m+n} \sum_{\ell =
       0}^n{n \choose \ell} (-1)^\ell\sum_{k=0}^{m-1}
     (-1)^{k}{m \choose k}(1+x)^{k\ell}\,.
   \end{equation}
   The sum over $k$ can be computed according to \eqref{eq:proof-aux}:
   \begin{equation}
     \label{eq:proof-7}
      D_{R_{n,m}}(x) = \big((1+x)^n -1\big)^m + \big((1+x)^m-1\big)^n - (-1)^n
    \sum_{\ell=0}^n(-1)^\ell{n \choose \ell} \big((1+x)^\ell-1\big)^m\,.
  \end{equation}
  A binomial expansion of $\big((1+x)^\ell-1\big)^m$ yields \eqref{eq:domination-polynomial-symmetric}.
\end{proof}

\begin{table}
  \centering
  \begin{tabular}{rcl}
    $D_{R_{1,1}}(x)$ & $=$ & $x$ \\[1ex]
    $D_{R_{2,2}}(x)$ & $=$ & $\numprint{6}\,x^{2} + \numprint{4}\,x^{3} + x^{4}$\\[1ex]
    $D_{R_{3,3}}(x)$ & $=$ & $ \numprint{48}\,x^{3} + \numprint{117}
                             x^{4} + \numprint{126}\,x^{5} +
                             \numprint{84}\,x^{6} + \numprint{36}\,x^{7}
                             + \numprint{9}\,x^{8} + x^{9}$ \\[1ex]
    $D_{R_{4,4}}(x)$& $=$ & $\numprint{488}\,x^4 + \numprint{2640}\,x^5 +
                            \numprint{6712}\,x^6 + \numprint{10864}\,x^7 +
                            \numprint{12726}\,x^8 + \numprint{11424}\,x^9 +
                            \numprint{8008}\,x^{10}+$\\
                     & & $\numprint{4368}\,x^{11} + \numprint{1820}\,x^{12} + \numprint{560}\,x^{13} + \numprint{120}\,x^{14} + \numprint{16}\,x^{15} + x^{16}$ \\[1ex]
    $D_{R_{5,5}}(x)$& $=$ & $\numprint{6130}\,x^5 + \numprint{58300}\,x^6 + \numprint{269500}\,x^7 
                            + \numprint{808325}\,x^8 + \numprint{1778875}\,x^9 + \numprint{3075160}\,
                            x^{10} +$\\
                     & & $\numprint{4349400}\,x^{11} + \numprint{5154900}\,x^{12} +
                         \numprint{5186300}\,x^{13} + \numprint{4454400}\,x^{14} +
                         \numprint{3268360}\,x^{15} +$ \\
                     & & $\numprint{2042950}\,x^{16} + \numprint{1081575}\,x^{17} + \numprint{480700}\, 
                         x^{18} + \numprint{177100}\,x^{19} + \numprint{53130}\,x^{20} + $\\
                     & & $\numprint{12650}\,x^{21} + \numprint{2300}\,x^{22} + \numprint{300}\,x^{23} + 
                         \numprint{25}\,x^{24} + x^{25}$\\[1ex]
    $D_{R_{6,6}}(x)$ & $=$ & $\numprint{92592}\,x^{6} +
                             \numprint{1356480}\,x^{7} +
                             \numprint{9859140}\,x^{8} +
                             \numprint{47187180}\,x^{9} +
                             \numprint{167284836}\,x^{10} +$\\
                     & & $\numprint{469268496}\,x^{11} + \numprint{1086623400}\,x^{12} +
                         \numprint{2137381200}\,x^{13} + \numprint{3642777000}\,x^{14}
                         +$\\
                     & & $\numprint{5453014080}\,x^{15} + \numprint{7235196885}\,x^{16} +
                         \numprint{8558765100}\,x^{17} + \numprint{9057864300}\,x^{18}
                         +$\\
                     & & $\numprint{8591124600}\,x^{19} + \numprint{7305959610}\,x^{20} +
                         \numprint{5567447160}\,x^{21} + \numprint{3796214400}\,x^{22}
                         +$\\
                     & & $\numprint{2310778800}\,x^{23} + \numprint{1251676800}\,x^{24} +
                         \numprint{600805260}\,x^{25} + \numprint{254186856}\,x^{26} +$\\
                     & & $\numprint{94143280}\,x^{27} + \numprint{30260340}\,x^{28} +
                         \numprint{8347680}\,x^{29} + \numprint{1947792}\,x^{30} +
                         \numprint{376992}\,x^{31} +$\\
                     & & $\numprint{58905}\,x^{32} + \numprint{7140}\,x^{33} +
                         \numprint{630}\,x^{34} + \numprint{36}\,x^{35} + x^{36}$\\[1ex]
    $D_{R_{7,7}}(x)$  &$=$& $\numprint{1642046}\,x^{7} +
                            \numprint{34112526}\,x^{8} +
                            \numprint{355943644}\,x^{9} +
                            \numprint{2472314110}\,x^{10} +$\\
                     & &  $\numprint{12823222482}\,x^{11} +
                         \numprint{52933543012}\,x^{12} + \numprint{181178358774}\,
                         x^{13}+$\\
                     & & $\numprint{529116154896}\,x^{14} +
                         \numprint{1346298997554}\,x^{15}+
                         \numprint{3031523389181}\,x^{16} +$\\
                     & & $
                         \numprint{6112557579744}\,x^{17} +
                         \numprint{11134728203116}\,x^{18} +
                         \numprint{18446369091724}\,x^{19} +
                         $\\
                     & & $
                         \numprint{27928246211796}\,x^{20}+
                         \numprint{38781291222674}\,x^{21}+
                         \numprint{49515597595786}\,x^{22}+ 
                         $\\
                     & & $
                         \numprint{58230726508164}\,x^{23}+
                         \numprint{63144145569911}\,x^{24}+
                         \numprint{63175905655695}\,x^{25}+
                         $\\
                     & & $
                         \numprint{58330909718550}\,x^{26}+
                         \numprint{49695284721096}\,x^{27}+
                         \numprint{39048436087654}\,x^{28}+
                         $\\
                     & & $
                         \numprint{28277118318876}\,x^{29}+
                         \numprint{18851589456070}\,x^{30} +
                         \numprint{11554240013008}\,x^{31} +
                       $\\
                     & & $
                         \numprint{6499267511814}\,x^{32}+
                         \numprint{3348108643131}\,x^{33} +
                         \numprint{1575580671714}\,x^{34} +
                         $\\
                     & & $
                         \numprint{675248870772}\,x^{35} +
                         \numprint{262596783715}\,x^{36} +
                         \numprint{92263734836}\,x^{37} +$\\
                     & & $\numprint{29135916264}\,x^{38} + \numprint{8217822536}\,x^{39}
                         + \numprint{2054455634}\,x^{40} + \numprint{450978066}\,x^{41}
                         +$\\
                     & & $\numprint{85900584}\,x^{42} + \numprint{13983816}\,x^{43} +
                         \numprint{1906884}\,x^{44} + \numprint{211876}\,x^{45} +
                         \numprint{18424}\,x^{46} +$\\
                     & & $\numprint{1176}\,x^{47} + \numprint{49}\,x^{48} + x^{49}$
  \end{tabular}
  \caption{Domination polynomials of the $n\times n$ rook graph.}
  \label{tab:examples}
\end{table}

\begin{table}
  \centering
  \begin{tabular}{rcl}
    $D_{R_{8,8}}(x)$ & $=$ & $\numprint{33514112}\, x^{8} +
                             \numprint{933879296}\, x^{9} +
                             \numprint{13161955968}\, x^{10} +
                             \numprint{124392729216}\, x^{11} + $\\
    & & $\numprint{883565332160}\, x^{12} + \numprint{5020456535808}\,
        x^{13} + \numprint{23745692294080}\, x^{14} + $\\
    & & $\numprint{96124772710912}\, x^{15} + \numprint{339958097017896}\, x^{16} + \numprint{1067094188274240}\, x^{17} +$\\
    & & $ \numprint{3009775897325792}\, x^{18} +
        \numprint{7703325822650304}\, x^{19} +$\\
    & & $\numprint{18031600637765680}\, x^{20} +
        \numprint{38843543834346048}\, x^{21} +$\\
    & & $ \numprint{77392553377032096}\, x^{22} + \numprint{143185055260371264}\, x^{23} + $\\
    & & $\numprint{246761069109093336}\, x^{24} + \numprint{397106882820897536}\, x^{25} + $\\
    & & $\numprint{597898212185747424}\, x^{26} + \numprint{843500295460142656}\, x^{27} +$\\
    & & $ \numprint{1116294749822105392}\, x^{28} + \numprint{1387019957382904768}\, x^{29} + $\\
    & & $\numprint{1619086454915331808}\, x^{30} + \numprint{1776352520871483072}\, x^{31} + $\\
    & & $\numprint{1832208846791514422}\, x^{32} + \numprint{1776875996843390912}\, x^{33} + $\\
    & & $\numprint{1620187226242379648}\, x^{34} + \numprint{1388775090898717312}\, x^{35} + $\\
    & & $\numprint{1118753489141190336}\, x^{36} + \numprint{846631073977386432}\, x^{37} + $\\
    & & $\numprint{601555988478702432}\, x^{38} + \numprint{401038042815966528}\, x^{39} + $\\
    & & $\numprint{250648973984891272}\, x^{40} + \numprint{146721398729422272}\, x^{41} + $\\
    & & $\numprint{80347442945600992}\, x^{42} + \numprint{41107995982971456}\, x^{43} + $\\
    & & $\numprint{19619725660610544}\, x^{44} +
        \numprint{8719878112062656}\, x^{45} +$\\
    & & $\numprint{3601688789838944}\, x^{46} +
        \numprint{1379370175208256}\, x^{47} +$\\
    && $\numprint{488526937076444}\, x^{48} + \numprint{159518999862656}\, x^{49} + $\\
    & & $\numprint{47855699958816}\, x^{50} + \numprint{13136858812224}\, x^{51} + \numprint{3284214703056}\, x^{52} + $\\
    & & $\numprint{743595781824}\, x^{53} + \numprint{151473214816}\, x^{54} + \numprint{27540584512}\, x^{55} + $\\
    & & $\numprint{4426165368}\, x^{56} + \numprint{621216192}\, x^{57} + \numprint{74974368}\, x^{58} + \numprint{7624512}\, x^{59} + $\\
    & & $ \numprint{635376}\, x^{60} + \numprint{41664}\, x^{61} +\numprint{2016}\, x^{62} + \numprint{64}\, x^{63} + x^{64}$
  \end{tabular}
  \caption{Domination polynomial of the $8 \times 8$ rook graph.}
  \label{tab:8x8}
\end{table}

Tables~\ref{tab:examples} and \ref{tab:8x8} show the domination polynomials
  $D_{R_{n,n}}(x)$ for $n=1,\ldots,8$. The coefficients of
  $D_{R_{n,n}}$ have become sequence \seqnum{A368831} in 
  the OEIS. The \emph{total number} of dominating sets,
\begin{equation}
  \label{eq:total}
  D_{R_{n,m}}(1) = (2^n-1)^m + (2^m-1)^n
  -(-1)^{n+m}\sum_{\ell=0}^n\sum_{k=0}^m (-1)^{k+\ell}2^{k\ell}\,,
\end{equation}
is in the OEIS as \seqnum{A287274}.

\section{Conclusions}

The connection between the domination polynomial of the rook graph
$R_{n,m}$ and the edge cover polynomial of the complete bipartite
graph $K_{n,m}$ allowed us to compute the former.
Theorem~\ref{thm:domination-polynomial} is our main result. As far as
we know, the rook is the first chess piece for which the domination
polynomial has been computed.

Evaluating the domination polynomial with a computer algebra system
like Mathematica seems to be the fastest way to compute the numerical
values of $d_{R_{n,m}}(k)$. These values have applications in
cryptography \cite{beguinot:etal:24}, which was the initial motivation
for this work.

The domination polynomial can also be used to study structural
properties of the sequences $d_{R_{n,m}}(k)$, like unimodality (which
has been proven recently using general arguments
\cite{burcroff:obrien:23}), the maximum,
or the asymptotics for large board sizes. We
leave this for further studies.

\section{Acknowledgments}

I am grateful to Sylvain Guilley and Julien B\'{e}guinot for guiding
my attention to this problem and to Cris Moore for helpful
discussions.  In theory, this work would have been possible without
the On-Line Encyclopedia of Integer Sequences. In practice, it was
not.

\bibliographystyle{jis}

\begin{thebibliography}{10}

\bibitem{akbari:alikhani:peng:10}
Saieed Akbari, Saeid Alikhani, and Yee-hock Peng, Characterization of graphs
  using domination polynomials, {\em European J. Combin.} {\bf
  31} (2010), 1714--1724.

\bibitem{akbari:oboudi:13}
Saieed Akbari and Mohammad~Reza Oboudi, On the edge cover polynomial of a
  graph, {\em European J. Combin.} {\bf 34} (2013), 297--321.

\bibitem{alikhani:peng:14}
Saeid Alikhani and Yee-Hock Peng, Introduction to domination polynomial of a
graph, {\em Ars Combin.} {\bf 114} (2014), 257--266.

\bibitem{beguinot:etal:24}
Julien B{\'e}guinot, Wei Cheng, Sylvain Guilley, and Olivier Rioul, {Formal
  security proofs via Doeblin coefficients: Optimal side-channel factorization
  from noisy leakage to random probing}, Cryptology ePrint Archive, Paper
  2024/199, 2024.
\newblock \url{https://eprint.iacr.org/2024/199}.

\bibitem{burcroff:obrien:23}
Amanda Burcroff and Grace O'Brien, Unimodality and monotonic portions of
  certain domination polynomials, {\em Discrete Math.} {\bf 346}
  (2023), 113508.

\bibitem{cockayne:90}
E.~J. Cockayne, Chessboard domination problems, {\em Discrete Math.} {\bf
  86} (1990), 13--20.

\bibitem{fischer:03}
David~C. Fischer, {On the $N\times N$ knight cover problem}, {\em Ars
  Combin.} {\bf 69} (2003), 255--274.

\bibitem{graham:knuth:patashnik:94}
Ronald~L. Graham, Donald~E. Knuth, and Oren Patashnik, {\em {Concrete
  Mathematics: A Foundation for Computer Science}}, {Addison-Wesley Publishing
  Company, Inc.}, 2nd edition, 1994.

\bibitem{haynes:hedetniemi:slater:98}
Teresa~W. Haynes, Stephen~T. Hedetniemi, and Peter~J. Slater, {\em
  {Fundamentals of Domination in Graphs}}, Marcel Dekker, Inc., 1998.

\bibitem{hedetniemi:hedetniemi:21}
Jason~T. Hedetniemi and Stephen~T. Hedetniemi, Domination in chessboards.
\newblock In Teresa~W. Haynes, Stephen~T. Hedetniemi, and Michael~A. Henning,
  eds., {\em Structures of Domination in Graphs}, Vol.~66 of {\em
  Developments in Mathematics}, pp.  341--386. Springer-Verlag, 2021.

\bibitem{jaenisch:62}
Carl~Friedrich Jaenisch, {\em {Trait{\'e} des Applications de l'Analyse
  Math{\'e}matique au Jeu des {\'E}checs}}, {Dufour \&\ C\textsuperscript{ie}},
  1862.

\bibitem{kearse:gibbons:01}
Matthew~D. Kearse and Peter~B. Gibbons, Computational methods and new results
  for chessboard problems, {\em Australas. J. Combin.} {\bf 23}
  (2001), 253--284.

\bibitem{oestergard:weakley:01}
Patric R.~J. {\"{O}}sterg{\aa}rd and William~D. Weakley, {Values of domination
  numbers of the queen's graph}, {\em Electron. J. Combin.}
  {\bf 8} (2001), R29.

\bibitem{rubin:03}
Frank Rubin, {Improved knight coverings}, {\em Ars Combin.} {\bf 69}
  (2003), 185--196.

\bibitem{oeis}
N.~J.~A. Sloane et~al., The {O}n-{L}ine {E}ncyclopedia of {I}nteger
  {S}equences, \url{https://oeis.org}.

\bibitem{watkins:12}
John~J. Watkins, {\em {Across the Board: The Mathematics of Chessboard
  Problems}}, Cambridge University Press, 2012.

\bibitem{yaglom:yaglom:64}
A.~M. Yaglom and I.~M. Yaglom, {\em {Challenging Mathematical Problems with
  Elementary Solutions}}, Vol.~I, Holden-Day, Inc., 1964.

\end{thebibliography}


\bigskip
\hrule
\bigskip

\noindent 2020 {\it Mathematics Subject Classification}:
Primary
  05C69   
  
Secondary
  05A15,  
  05C30,  
  11B83   
 
\noindent \emph{Keywords:}
domination polynomial, dominating set, rook graph

\bigskip
\hrule
\bigskip

\noindent (Concerned with sequences \seqnum{A006075},
\seqnum{A055599}, \seqnum{A075458} , \seqnum{A287274}, \seqnum{A368831})

\end{document}